\newtheorem{thm}{Theorem}
\newtheorem{prop}[thm]{Proposition}
\newtheorem{cor}[thm]{Corollary}
\theoremstyle{remark}
\newcommand{\FF}{\mathbb{F}}
\DeclareMathOperator{\supp}{supp}
\begin{document}
\title{Binary extremal self-dual codes of length $60$ and related codes}

\author{
Masaaki Harada\thanks{
Research Center for Pure and Applied Mathematics,
Graduate School of Information Sciences,
Tohoku University, Sendai 980--8579, Japan.
email: mharada@m.tohoku.ac.jp.}
}
\date{}

\maketitle

\begin{abstract}
We give a classification of four-circulant singly even self-dual 
$[60,30,d]$ codes for $d=10$ and $12$.
These codes are used to construct extremal singly even self-dual 
$[60,30,12]$ codes with weight enumerator for which no extremal
singly even self-dual code was previously known to exist.
From extremal singly even self-dual $[60,30,12]$ codes,
we also construct extremal singly even self-dual 
$[58,29,10]$ codes with weight enumerator for which no extremal
singly even self-dual code was previously known to exist.
Finally, we give some restriction on the possible weight enumerators
of certain singly even self-dual codes
with shadow of minimum weight $1$.
\end{abstract}

\noindent
{\bf Keywords:}  extremal self-dual code; weight enumerator; neighbor.

\medskip
\noindent
{\bf MSC 2010 Codes:} 94B05

%%%%%%%%%%%%%%%%%%%%%%%%%%%%%%
\section{Introduction}\label{sec:1}
Let $C$ be a (binary) singly even self-dual code.
All codes in this note are binary.
Let $C_0$ denote the 
subcode of $C$ consisting of 
codewords having weight $\equiv0\pmod4$.
% Then $C_0$ is a subcode of codimension $1$.
The {\em shadow} $S$ of $C$ is defined to be $C_0^\perp \setminus C$.
Shadows for self-dual codes were introduced by Conway and
Sloane~\cite{C-S}
in order to derive new upper bounds for the minimum weight of
singly even self-dual codes, and to provide
restrictions on the weight enumerators of 
singly even self-dual codes.
By considering shadows, the largest possible minimum weights of
singly even self-dual codes of lengths up to $72$ are
given in~\cite[Table~I]{C-S}.
In this note,
we say that a singly even self-dual code with
the largest possible minimum weight given in~\cite[Table~I]{C-S}
is {\em extremal}.

The possible weight enumerators of extremal singly 
even self-dual codes are given in~\cite{C-S}
for lengths up to $64$ and length $72$ (see also~\cite{GH60} for length $60$).
It is a fundamental problem to find which weight enumerators 
actually occur
for the possible weight enumerators (see~\cite{C-S}).
The possible weight enumerators of extremal singly even 
self-dual $[60,30,12]$
codes are known as follows:
\begin{eqnarray*}
W_{60,1} &=& 1 +(2555+64 \beta) y^{12} + (33600 - 384 \beta)y^{14} + \cdots,
\\
W_{60,2} &=& 1 + 3451y^{12} + 24128 y^{14} + \cdots,
\end{eqnarray*}
where $\beta$ is an integer. 
If there is an extremal singly even self-dual
$[60,30,12]$ code with weight enumerator $W_{60,1}$,
then $\beta \in \{0,1,2,\ldots,8,10\}$~\cite{HM}.
For $\beta=0,1,5,7$ and $10$,
an extremal singly even self-dual code with weight enumerator $W_{60,1}$
was found in~\cite{TJ98}, \cite{BRY}, \cite{YL},
\cite{DH03} and \cite{GH60}, respectively.
An extremal singly even self-dual code with weight enumerator $W_{60,2}$
was found in~\cite{C-S}.

One of the main aims of this note is to show the following:

\begin{prop}\label{main}
There is an extremal singly even self-dual $[60,30,12]$
code with weight enumerator $W_{60,1}$ for $\beta=2,6$.
\end{prop}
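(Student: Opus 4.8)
The plan is to produce the two required codes explicitly, using the classification of four-circulant singly even self-dual $[60,30,d]$ codes ($d=10,12$) carried out in this note as a reservoir of seed codes, and then applying the neighbor construction. Recall that if $C$ is a singly even self-dual $[60,30]$ code and $x\in\FF_2^{60}\setminus C$, then $N_x(C):=\langle C\cap x^{\perp},\,x\rangle$ is again a singly even self-dual $[60,30]$ code, a \emph{neighbor} of $C$, and conversely every self-dual code meeting $C$ in a $[60,29]$ subcode has this form. So I would take each four-circulant code $C$ from the classification, let $x$ run over a complete set of coset representatives of $C$ in $\FF_2^{60}$, form the neighbors $N_x(C)$, discard those of minimum weight less than $12$, and compute the weight enumerators of the rest.

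Since a code with weight enumerator $W_{60,1}$ has exactly $2555+64\beta$ codewords of weight $12$, the invariant $\beta$ is read off directly from $A_{12}$; the objective is to exhibit neighbors with $A_{12}=2555+2\cdot 64$ and $A_{12}=2555+6\cdot 64$, that is, $\beta=2$ and $\beta=6$. To keep this tractable I would use automorphism groups: $N_x(C)$ and $N_{\sigma(x)}(C)$ are equivalent for every $\sigma\in\Aut(C)$, so $x$ need only range over orbit representatives of $\Aut(C)$ on the nonzero cosets of $C$, a substantial reduction given that the four-circulant structure forces $\Aut(C)$ to be fairly large. It is also natural to start from the four-circulant $[60,30,10]$ codes as well as the $[60,30,12]$ ones, since a neighbor of a $[60,30,10]$ code may perfectly well have minimum weight $12$; this widens the candidate pool with little extra cost and is presumably the reason the $d=10$ case is classified here at all.

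Once codes with $\beta=2$ and with $\beta=6$ have been located, the write-up records, for each, the defining circulant blocks of the seed together with the neighbor vector $x$ (equivalently, an explicit generator matrix), and then it suffices to note that a direct computation verifies self-duality, minimum weight $12$, and the full weight enumerator $W_{60,1}$ with the stated $\beta$.

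The one genuine difficulty is computational scale rather than mathematical content: the underlying classification itself rests on an exhaustive machine search, and the neighbor search layered on top of it must be organized carefully—via the automorphism-group reduction above, and by prioritizing cosets whose minimum-weight vectors are not too heavy, since those are the ones that tend to preserve the minimum distance while altering $A_{12}$—so that it terminates in reasonable time. By contrast, verifying any single witnessing code is entirely routine.
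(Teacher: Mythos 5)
Your overall strategy is the same as the paper's: use the classified four-circulant self-dual $[60,30,d]$ codes ($d=10,12$) as seeds, pass to self-dual neighbors $\langle C\cap x^{\perp},x\rangle$, keep those of minimum weight $12$, and read $\beta$ off from $A_{12}$; a single witness for each value of $\beta$ then settles the proposition by direct verification. This does produce $\beta=2$: in the paper the code $D_{60,3}$ with $\beta=2$ is a direct neighbor of the extremal four-circulant code $C_{60,1}$.

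However, as stated your plan stops after one round of neighbors of the four-circulant codes, and that is not enough to reach $\beta=6$. The paper in fact classifies \emph{all} extremal self-dual neighbors of the $13$ extremal four-circulant $[60,30,12]$ codes and of the $113$ four-circulant $[60,30,10]$ codes, and the values of $\beta$ occurring among these first-level neighbors are only $0$, $2$ and $10$ (Tables~\ref{Tab:nei} and \ref{Tab:d10nei}); no direct neighbor of any four-circulant seed has $\beta=6$. The code realizing $\beta=6$, namely $J_{60,5}$, only appears at the second level: it is a neighbor of $H_{60,4}$, which is itself a neighbor of the four-circulant $[60,30,10]$ code $G_{60,4}$. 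So the missing idea is to iterate the neighbor construction, i.e.\ to continue taking successive neighbors of the newly found extremal codes (the paper's chains $D$, $E$, $F$ and $H$, $J$, $K$, $L$), rather than searching only among neighbors of the four-circulant codes themselves. A minor further point: your proposed reduction by letting $x$ range over $\Aut(C)$-orbit representatives of all $2^{30}$ cosets is in principle fine but heavier than needed; the standard parametrization used in the paper runs instead over the $2^{n/2-1}-1$ codimension-one subcodes of $C$ containing the all-one vector, giving the $2(2^{29}-1)$ self-dual neighbors directly.
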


These codes are constructed from four-circulant singly even 
self-dual $[60,30,d]$ codes for $d=10$ and $12$
by considering self-dual neighbors. 
It remains to determine whether there is 
an extremal singly even self-dual $[60,30,12]$
code with weight enumerator $W_{60,1}$ for 
$\beta=3,4,8$.

The possible weight enumerators of extremal singly even 
self-dual $[58,29,10]$
codes are known as follows:
\begin{eqnarray*}
W_{58,1} &=& 1 + (165 - 2 \gamma)y^{10} + (5078 + 2 \gamma)y^{12} 
+ \cdots,
\\
W_{58,2} &=& 1 +(319 - 24 \beta - 2 \gamma)y^{10} + 
(3132 + 152 \beta + 2\gamma)y^{12} + \cdots,
\end{eqnarray*}
where $\beta,\gamma$ are integers~\cite{C-S}. 
If there is an extremal singly even self-dual
$[58,29,10]$ code with weight enumerator $W_{58,2}$,
then $\beta \in \{0,1,2\}$~\cite{HM}.
An extremal singly even self-dual code with weight enumerator
$W_{58,1}$ is known for $\gamma=55$~\cite{Tsai92-2}.
An extremal singly even self-dual code with weight enumerator
$W_{58,2}$ is known for
\begin{align*}
\beta=0 \text{ and }
& \gamma\in \{2m \mid m =0,1,5,6,8,9,10,11,13,\ldots,65,68,71,79\}, \\
\beta=1  \text{ and }
& \gamma\in \{2m \mid m =13,14,16,\ldots,58,63\}, \\
\beta=2  \text{ and }
& \gamma\in \{2m \mid m =0,16,\ldots,50,55\}
\end{align*}
(see \cite{KA58}, \cite{KYS58}, \cite{YL}).

The following proposition is one of the main results of
this note.

\begin{prop}\label{prop:58}
There is an extremal singly even self-dual $[58,29,10]$
code with weight enumerator $W_{58,2}$ for
\begin{align*}
\beta=0 \text{ and }
& \gamma\in \{2m \mid m =2,3,4,7,12\}, \\
\beta=1  \text{ and }
& \gamma\in \{2m \mid m =8,9,10,11,12,15\}, \\
\beta=2  \text{ and }
& \gamma\in \{2m \mid m =4,6,7,8,9,10,11,12,13,14,15,51,52,53,54\}.
\end{align*}
% \begin{align*}
% (\beta,\gamma)= &
% (0,4),(0,6),(0,8),(0,14),(0,24),\\&
% (1,16),(1,18),(1,20),(1,22),(1,24),(1,30),\\&
% (2,8),(2,12),(2,14),(2,16),(2,18),(2,20),(2,22),(2,24),\\&
% (2,26),(2,28),(2,30),(2,102), (2,104), (2, 106), (2,108). 
% \end{align*}
\end{prop}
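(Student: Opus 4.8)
The plan is to produce the required codes by ``subtracting'' a pair of coordinates from extremal singly even self-dual $[60,30,12]$ codes. Recall the classical construction: let $C$ be a binary self-dual code of length $n$ with minimum weight at least $4$, and let $i\ne j$ be two coordinates. Since $C=C^{\perp}$ has no codeword of weight $2$, the vector with support $\{i,j\}$ is not in $C$, so the linear map $c\mapsto c_i+c_j$ is onto $\FF_2$ and $D:=\{c\in C:c_i=c_j\}$ has dimension $n/2-1$. Let $\pi$ delete coordinates $i$ and $j$. The only codeword of $D$ supported within $\{i,j\}$ is $0$, so $\pi|_D$ is injective; and for $u,v\in D$ we have $u_iv_i+u_jv_j=2u_iv_i=0$, hence $\langle\pi(u),\pi(v)\rangle=\langle u,v\rangle=0$. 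Therefore $C^{(i,j)}:=\pi(D)$ is a self-orthogonal, hence self-dual, $[n-2,(n-2)/2]$ code, and $\wt(\pi(c))=\wt(c)-2c_i\ge\wt(c)-2$ for $c\in D$.

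Take $n=60$ and $C$ an extremal singly even self-dual $[60,30,12]$ code. By the last inequality, every nonzero codeword of $C^{(i,j)}$ has weight at least $10$. As $58\not\equiv 0\pmod 8$, there is no doubly even self-dual code of that length, so $C^{(i,j)}$ is singly even; by the bound of Conway and Sloane recalled above~\cite{C-S}, a singly even self-dual $[58,29]$ code has minimum weight at most $10$. Hence \emph{every} pair $\{i,j\}$ yields an extremal singly even self-dual $[58,29,10]$ code $C^{(i,j)}$, with weight enumerator $W_{58,1}$ or $W_{58,2}$ for some integers $\beta,\gamma$. So it remains only to realize the pairs $(\beta,\gamma)$ listed in the statement. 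Note moreover that $A_w\bigl(C^{(i,j)}\bigr)=A^{00}_{w}(C)+A^{11}_{w+2}(C)$, where a superscript records the value of $(c_i,c_j)$ on the codewords counted; so the weight enumerator of $C^{(i,j)}$ is determined by the joint distribution of weight and $(c_i,c_j)$ over $C$.

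For the pool of input codes I would take the extremal $[60,30,12]$ codes obtained in this note from the classification of four-circulant singly even self-dual $[60,30,10]$ and $[60,30,12]$ codes and their self-dual neighbors — in particular those with weight enumerator $W_{60,1}$ for all the values of $\beta$ realized earlier, including $\beta=2,6$ from Proposition~\ref{main}, together with codes of weight enumerator $W_{60,2}$. For each such $C$ and each of the $\binom{60}{2}=1770$ pairs $\{i,j\}$ one forms $C^{(i,j)}$ via the distributions above, tests whether its weight enumerator has type $W_{58,1}$ or $W_{58,2}$, and in the latter case records $(\beta,\gamma)$; generator matrices for codes attaining the new values can then be exhibited explicitly. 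The structural steps are all forced, so the only real difficulty is the search: there is no a priori guarantee that a prescribed pair $(\beta,\gamma)$ is hit, and I expect the values near the ends of the admissible range — for $\beta=2$ the cases $m=51,\dots,54$, with $2\gamma$ close to the bound $319-24\beta$ — to require the most searching, with completeness of the four-circulant classification being what makes enough suitable length-$60$ codes available.
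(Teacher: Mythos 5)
Your subtraction step itself is sound: for an extremal self-dual $[60,30,12]$ code $C$ and any pair $\{i,j\}$, the punctured subcode $C^{(i,j)}$ is indeed a self-dual $[58,29]$ code of minimum weight at least $10$, hence exactly $10$ by the Conway--Sloane bound, and it is singly even since $58\not\equiv 0 \pmod 8$. This is exactly the first stage of the paper's construction.

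The genuine gap is that subtracting alone does not reach the list of $(\beta,\gamma)$ in the proposition, and your proposal contains no mechanism to go further. The paper's computation over all pairs of coordinates of all $37$ inequivalent extremal $[60,30,12]$ codes constructed in the note shows that the only new weight enumerator obtained this way is $W_{58,2}$ with $(\beta,\gamma)=(2,104)$, coming from $D_{60,3}$ (Table~\ref{Tab:58}); up to equivalence this yields just two codes, $C_{58,1}$ and $C_{58,3}$. All of the remaining $24$ values in the proposition --- all of the $\beta=0$ and $\beta=1$ cases, and $\beta=2$ with $\gamma\in\{8,\ldots,30\}\cup\{102,106,108\}$ --- are produced by a second stage that your plan omits: repeatedly passing to self-dual \emph{neighbors} of the length-$58$ codes themselves, i.e.\ the chains $D_{58,i}$, $E_{58,i}$, $F_{58,i}$, $G_{58,i}$, $H_{58}$ of Table~\ref{Tab:58nei}, each of the form $\langle (D\cap\langle x\rangle^\perp),x\rangle$. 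You yourself note there is no a priori guarantee that a prescribed $(\beta,\gamma)$ is hit by subtracting; the paper's data show that for these targets it in fact is not, so a proof along your lines must add the neighbor search at length $58$ (or some other new source of codes). Two smaller points: the note constructs no extremal $[60,30,12]$ code with weight enumerator $W_{60,2}$, so that part of your proposed pool is empty; and your guess about which cases are hardest is inverted --- the large values $\gamma=102,\ldots,108$ come directly from subtracting $D_{60,3}$ and one neighbor step, while the small values of $\gamma$ are the ones requiring the longer neighbor chains.
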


These codes are constructed from extremal
singly even self-dual $[60,30,12]$ codes constructed in this note
by subtracting and their self-dual neighbors.
Finally, we give some restriction on the possible weight enumerators
of certain singly even self-dual codes
with shadow of minimum weight $1$
(Proposition~\ref{prop:restriction}).
As a consequence, it is shown that $\gamma=55$ for
the possible weight enumerator $W_{58,1}$
(Corollary~\ref{cor}).
All self-dual codes in this note are singly even.
From now on, we omit the term singly even.

All computer calculations in this note
were done with the help of {\sc Magma}~\cite{Magma}.

%%%%%%%%%%%%%%%%%%%%%%%%%%%%%%%%%%%%%%%%%%%%%%
\section{Extremal four-circulant 
self-dual $[60,30,12]$ codes}\label{sec:4cir}

An $n \times n$ circulant matrix has the following form:
\[
\left(
\begin{array}{ccccc}
r_0&r_1&r_2& \cdots &r_{n-1} \\
r_{n-1}&r_0&r_1& \cdots &r_{n-2} \\
\vdots &\vdots & \vdots && \vdots\\
r_1&r_2&r_3& \cdots&r_0
\end{array}
\right),
\]
so that each successive row is a cyclic shift of the previous one.
Let $A$ and $B$ be $n \times n$ circulant matrices.
Let $C$ be a $[4n,2n]$ code with generator matrix of the following form:
\begin{equation} \label{eq:GM}
\left(
\begin{array}{ccc@{}c}
\quad & {\Large I_{2n}} & \quad &
\begin{array}{cc}
A & B \\
B^T & A^T
\end{array}
\end{array}
\right),
\end{equation}
where $I_n$ denotes the identity matrix of order $n$
and $A^T$ denotes the transpose of $A$.
It is easy to see that $C$ is self-dual if
$AA^T+BB^T=I_n$.
The codes with generator matrices of the form~\eqref{eq:GM}
are called {\em four-circulant}.

In this section, we give a classification of extremal
four-circulant 
self-dual $[60,30,12]$ codes.
Two codes are {\em equivalent} if one can be obtained from the other by a
permutation of coordinates.
Our exhaustive search found all distinct extremal four-circulant 
self-dual $[60,30,12]$ codes,
which must be checked further for equivalence to
complete the classification.
This was done by considering all pairs of 
$15 \times 15$ circulant matrices $A$ and $B$ satisfying
the condition that 
$AA^T+BB^T=I_{15}$, the sum of 
the weights of the first rows of $A$ and $B$
is congruent to $1 \pmod 4$
and the sum of the weights is greater than or equal to $13$.
Since a cyclic shift of the first rows gives an equivalent code,
we may assume without loss of generality that 
the last entry of the first row of $B$ is $1$.
% Using {\sc Magma}~\cite{Magma}, 
Then our computer search shows that 
the above distinct extremal four-circulant 
self-dual $[60,30,12]$ codes are divided into
$13$ inequivalent codes.

\begin{prop}
Up to equivalence, there are $13$  extremal four-circulant 
self-dual $[60,30,12]$ codes.
\end{prop}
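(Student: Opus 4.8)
The plan is to carry out an exhaustive computer search over the parameter space described in the paragraph preceding the statement, and then reduce the resulting list of codes modulo equivalence. First I would set up the search: a four-circulant self-dual $[60,30,12]$ code is determined by an ordered pair $(A,B)$ of $15\times15$ circulant matrices, each specified by its first row $r_A, r_B \in \FF_2^{15}$, subject to the self-duality constraint $AA^T + BB^T = I_{15}$. Working in the group algebra $\FF_2[\ZZ/15\ZZ]$, circulant matrices commute and the constraint becomes a single polynomial identity, which prunes the candidate pairs drastically before any rank or minimum-weight computation. I would further impose the normalizations justified in the text: the combined weight $\wt(r_A)+\wt(r_B)$ is $\equiv 1 \pmod 4$ and is at least $13$ (since $d=12$ forces the rows of the generator matrix to have weight at least $12$, hence the right-hand block rows have weight at least $11$, and a parity/weight argument sharpens this), and the last coordinate of $r_B$ is $1$ to kill the cyclic-shift symmetry on $B$.

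Next, for each surviving pair $(A,B)$ I would build the generator matrix~\eqref{eq:GM}, verify self-duality (already guaranteed by the algebraic constraint, but worth a sanity check), and compute the minimum weight with {\sc Magma}; retain exactly those codes with $d=12$. This yields the finite list of ``distinct extremal four-circulant'' codes referred to in the text. The final step is the equivalence reduction: compute a canonical form (or use {\sc Magma}'s \texttt{IsIsomorphic}/automorphism-group machinery) to partition this list into equivalence classes under coordinate permutations, and count the classes. The claim is that exactly $13$ classes result.

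The main obstacle is the combinatorial size of the search: a priori there are $2^{15}$ choices for each of $r_A$ and $r_B$, so $2^{30}\approx 10^9$ pairs, which is large but not hopeless, and the self-duality identity together with the weight and normalization constraints cuts this down to a tractable set. A secondary obstacle is the cost of the minimum-weight computation on each candidate and the equivalence testing at the end; the equivalence step is delicate because one must be sure the canonical-form computation is complete and that no two entries on the final list of $13$ are accidentally identified or wrongly separated. Since the statement is an enumeration result whose proof is a verified exhaustive computation, the write-up amounts to recording the search design above, confirming the constraints used, and reporting that {\sc Magma} returns $13$ inequivalent codes; I would also tabulate the defining first rows $r_A, r_B$ and the orders of the automorphism groups so the result is reproducible.
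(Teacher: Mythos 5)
Your proposal matches the paper's own argument: an exhaustive search over pairs of first rows $r_A,r_B$ of $15\times15$ circulant matrices with $AA^T+BB^T=I_{15}$, the weight-sum conditions ($\equiv 1 \pmod 4$ and $\ge 13$), the normalization that the last entry of $r_B$ is $1$, followed by minimum-weight filtering and equivalence reduction in {\sc Magma}, yielding $13$ classes. This is essentially the same computational proof the paper records.
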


We denote the $13$ codes by $C_{60,i}$ $(i=1,2,\ldots,13)$.
For the $13$ codes $C_{60,i}$ $(i=1,2,\ldots,13)$,
the first rows $r_A$ (resp.\ $r_B$) of the circulant matrices $A$
(resp.\ $B$) 
in generator matrices~\eqref{eq:GM} are listed in Table~\ref{Tab:60}.
We verified that the codes $C_{60,i}$ have
weight enumerator $W_{60,1}$, where $\beta$ are also listed
in Table~\ref{Tab:60}.
% Some codes can be found in~\cite{HM2}.

%%%%%%%%%%%%%%%%%%%%%%%%%%%%%%
\begin{table}[thb]
\caption{Extremal four-circulant self-dual $[60,30,12]$
 codes $C_{60,i}$}
\label{Tab:60}
\begin{center}
%{\small
{\footnotesize
%{\scriptsize
\begin{tabular}{c|c|c|c}
\noalign{\hrule height0.8pt}
Code & $r_A$&$r_B$ & $\beta$\\
\hline
$C_{60, 1}$&$(1,0,1,1,1,0,0,0,0,1,1,1,0,1,1)$&$(0,0,0,0,0,0,1,0,1,0,0,1,0,0,1)$&0\\
$C_{60, 2}$&$(0,1,1,1,1,0,0,1,1,1,1,1,1,1,0)$&$(0,1,0,0,1,0,0,0,0,0,0,1,1,1,1)$&0\\
$C_{60, 3}$&$(1,1,0,0,1,0,0,1,0,1,1,0,1,1,1)$&$(0,0,0,0,1,0,0,0,0,0,0,1,1,0,1)$&0\\
$C_{60, 4}$&$(1,1,1,0,0,1,0,1,0,1,0,0,1,1,1)$&$(1,0,1,1,1,1,0,0,0,0,0,1,1,0,1)$&0\\
$C_{60, 5}$&$(1,1,1,1,0,1,1,0,1,1,1,0,1,1,0)$&$(1,1,0,1,0,1,1,1,0,0,0,1,1,1,1)$&0\\
$C_{60, 6}$&$(1,1,1,1,1,1,1,1,0,1,1,1,1,1,0)$&$(0,1,0,1,1,1,0,0,0,0,0,1,1,1,1)$&0\\
$C_{60, 7}$&$(0,1,1,0,0,1,1,1,0,1,1,0,1,1,1)$&$(0,1,0,0,1,1,0,0,0,0,0,1,1,1,1)$&0\\
$C_{60, 8}$&$(0,0,1,1,0,0,1,0,1,1,0,0,1,1,1)$&$(0,1,0,1,1,1,1,0,0,0,0,1,1,1,1)$&0\\
$C_{60, 9}$&$(0,1,1,0,1,0,0,1,0,0,1,0,1,1,0)$&$(1,0,1,0,0,0,0,0,0,0,0,1,1,1,1)$&10\\
$C_{60,10}$&$(0,1,1,1,0,1,0,1,1,1,1,0,1,1,0)$&$(0,1,0,1,0,1,1,0,0,0,0,1,1,0,1)$&10\\
$C_{60,11}$&$(0,0,1,1,1,1,1,1,1,1,0,0,1,1,0)$&$(0,1,0,1,0,1,1,0,0,0,0,1,1,0,1)$&10\\
$C_{60,12}$&$(1,1,0,0,1,0,0,0,0,1,1,0,1,1,1)$&$(0,0,0,0,0,1,0,0,0,0,0,1,1,1,1)$&10\\
$C_{60,13}$&$(1,1,1,1,1,1,0,1,1,0,1,0,1,1,0)$&$(0,0,1,0,0,1,0,0,0,0,0,1,1,1,1)$&10\\
\noalign{\hrule height0.8pt}
\end{tabular}
}
\end{center}
\end{table}
%%%%%%%%%%%%%%%%%%%%%%%%%%%%%%%

%%%%%%%%%%%%%%%%%%%%%%%%%%%%%%%%%%%%%%%%%%%%%%
\section{Extremal self-dual $[60,30,12]$ neighbors}
\label{sec:nei}

Two self-dual codes $C$ and $C'$ of length $n$
are said to be {\em neighbors} if $\dim(C \cap C')=n/2-1$. 
Any self-dual code of length $n$ can be reached
from any other by taking successive neighbors (see~\cite{C-S}).
It is known that a self-dual code $C$ of length $n$ has 
$2(2^{n/2-1}-1)$ self-dual neighbors.
These neighbors are constructed by finding
$2^{n/2-1}-1$ subcodes of codimension $1$ in $C$
containing the all-one vector.
A computer program written in {\sc Magma},
which was used to find self-dual neighbors,
can be obtained electronically from
\url{http://www.math.is.tohoku.ac.jp/~mharada/Paper/neighbor.txt}.
In this section, we construct extremal self-dual 
$[60,30,12]$ codes by considering self-dual neighbors.

%%%%%%%%%%%%%%%%%%%%%%%%%%%%%%
\begin{table}[thb]
\caption{Extremal self-dual $[60,30,12]$ neighbors $D_{60,i}$}
\label{Tab:nei}
\begin{center}
%{\small
{\footnotesize
%{\scriptsize
\begin{tabular}{c|c|l|c}
\noalign{\hrule height0.8pt}
$C$ & $D$ & \multicolumn{1}{c|}{$\supp(x)$} & $W$\\
\hline
$D_{60,1}$&$C_{60,1}$ & $\{1,31,32,38,42,43,46,47,48,50,51,55\}$&$\beta= 0$\\
$D_{60,2}$&$C_{60,1}$ & $\{2,3,8,33,35,39,40,41,46,50,54,59\}  $&$\beta= 0$\\
$D_{60,3}$&$C_{60,1}$ & $\{4,8,9,32,42,43,48,51,53,54,56,60\}  $&$\beta= 2$\\
$D_{60,4}$&$C_{60,2}$ & $\{2,32,34,38,40,43,49,52,54,55,57,59\}$&$\beta= 0$\\
$D_{60,5}$&$C_{60,4}$ & $\{1,31,35,39,40,41,42,43,50,52,54,55\}$&$\beta= 0$\\
$D_{60,6}$&$C_{60,10}$&$\{2,32,38,41,43,49,51,52,54,55,56,60\}$& $\beta=10$\\
$D_{60,7}$&$C_{60,12}$&$\{3,7,10,32,35,36,38,46,53,55,58,60\} $& $\beta=10$\\
\noalign{\hrule height0.8pt}
\end{tabular}
}
\end{center}
\end{table}
%%%%%%%%%%%%%%%%%%%%%%%%%%%%%%%

For $i=1,2,\ldots,13$,
by finding all $2(2^{29}-1)$ self-dual neighbors of $C_{60,i}$,
we determined the equivalence classes among
extremal self-dual neighbors of $C_{60,i}$.
Our computer search shows that
the code $C_{60,i}$ has $n_i$ inequivalent
extremal self-dual neighbors, 
which are equivalent to none of the $13$ codes $C_{60,j}$, where
$n_i$ are given by
\[
n_i= \begin{cases}
    3 & \text{if }i=1, \\
    1 & \text{if }i=2,4,10,12, \\
    0 & \text{otherwise.}\\
  \end{cases}
\]
% \[
% n_1=3, n_2=n_4=n_{10}=n_{12}=1,
% n_i=0 \ (i=3,5,6,7,8,9,11,13).
% \]
We denote the $7$ extremal self-dual codes
by $D_{60,i}$ $(i=1,2,\ldots,7)$.
These codes $C=D_{60,i}$ are constructed as
\[
\langle (D \cap \langle x \rangle^\perp), x \rangle,
\]
where $D$ and 
the support $\supp(x)$ of $x$ are listed in Table~\ref{Tab:nei}.
We verified that the codes $D_{60,i}$ have
weight enumerator $W_{60,1}$, where $W$ in Table~\ref{Tab:nei}
indicates the values $\beta$ in the weight enumerator $W_{60,1}$.
% Therefore, we have Proposition~\ref{main}.
The code $D_{60,3}$ has the following weight enumerator:
\begin{align*}
&
1
+ 2683 y^{12}
+ 32832 y^{14}
+ 280017 y^{16}
+ 1719808 y^{18}
+ 7800120 y^{20}
\\ &
+ 26380032 y^{22}
+ 67167368 y^{24}
+ 130134528 y^{26}
+ 193185267 y^{28}
\\ &
+ 220336512 y^{30}
+ \cdots + y^{60}.
\end{align*}
We verified that there is no pair of equivalent codes
among the $13$ codes $C_{60,i}$ and the $7$ codes $D_{60,i}$.

We continue the search to find extremal self-dual codes
by considering self-dual neighbors.
We found all inequivalent extremal self-dual 
neighbors $E_{60,i_1}$ of $D_{60,i_2}$,
which are equivalent to none of the extremal self-dual 
codes previously obtained in this note.
For the codes 
$E_{60,i_1}=\langle (D \cap \langle x \rangle^\perp), x \rangle$,
$D$ and $\supp(x)$ are listed in Table~\ref{Tab:nei2}.
In the table, $W$ indicates the values $\beta$ in 
the weight enumerator $W_{60,1}$.
By continuing this process, we found all inequivalent 
extremal self-dual neighbors of $E_{60,i}$,
which are equivalent to none of the extremal self-dual 
codes previously obtained in this note.
% Then only $E_{60,3}$ gives  an
% extremal singly even self-dual neighbor $F_{60}$.
% We verified that there is no pair of equivalent codes
% among the $13$ codes $C_{60,i}$ and the neighbors
% in Tables \ref{Tab:nei} and \ref{Tab:nei2}.
Finally, we verified that there is no extremal 
self-dual neighbor of $F_{60}$, 
which are equivalent to none of the extremal self-dual 
codes previously obtained in this note.

%%%%%%%%%%%%%%%%%%%%%%%%%%%%%%
\begin{table}[thb]
\caption{Extremal self-dual $[60,30,12]$
neighbors $E_{60,i}$ and $F_{60}$}
\label{Tab:nei2}
\begin{center}
%{\small
{\footnotesize
%{\scriptsize
\begin{tabular}{c|c|l|c}
\noalign{\hrule height0.8pt}
$C$ & $D$ & \multicolumn{1}{c|}{$\supp(x)$} & $W$\\
\hline
$E_{60,1}$& $D_{60,2}$ & $\{2,3,6,31,32,37,39,40,46,47,54,57\}$& $\beta=0$\\
$E_{60,2}$& $D_{60,6}$ & $\{1,2,5,7,8,40,43,46,47,50,51,60\}$& $\beta=10$\\
$E_{60,3}$& $D_{60,6}$ & $\{1,4,5,8,36,38,39,40,48,53,55,60\}$& $\beta=10$\\
$E_{60,4}$& $D_{60,6}$ & $\{3,32,33,34,37,46,48,52,56,57,58,60\}$& $\beta=10$\\
\hline
$F_{60}$& $E_{60,3}$ & $\{1,2,5,35,37,40,45,49,50,55,57,59\}$& $\beta=10$\\
\noalign{\hrule height0.8pt}
\end{tabular}
}
\end{center}
\end{table}
%%%%%%%%%%%%%%%%%%%%%%%%%%%%%%%

%%%%%%%%%%%%%%%%%%%%%%%%%%%%%%%
\section{Extremal four-circulant 
self-dual $[60,30,10]$ codes and self-dual neighbors}\label{sec:d10}

Using an approach similar to that given in Section~\ref{sec:4cir},
our exhaustive search found all distinct 
four-circulant self-dual $[60,30,10]$ codes.
Then our computer search shows that 
the distinct four-circulant 
self-dual $[60,30,10]$ codes are divided into
$113$ inequivalent codes.

\begin{prop}
Up to equivalence, there are $113$ 
four-circulant self-dual $[60,30,10]$ codes.
\end{prop}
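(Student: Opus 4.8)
The plan is to carry out an exhaustive computer search entirely analogous to the one used in Section~\ref{sec:4cir} for the minimum weight $12$ case, only now dropping the minimum-weight threshold from $12$ to $10$. First I would enumerate all pairs $(A,B)$ of $15\times 15$ circulant matrices over $\FF_2$ subject to the self-duality constraint $AA^T+BB^T=I_{15}$. Parametrizing $A$ and $B$ by their first rows $r_A,r_B\in\FF_2^{15}$, the constraint $AA^T+BB^T=I_{15}$ becomes a system of equations in the $30$ bits of $(r_A,r_B)$; since circulant matrices over $\FF_2$ form a commutative ring isomorphic to $\FF_2[x]/(x^{15}-1)$, this is the condition $a(x)a(x^{-1})+b(x)b(x^{-1})\equiv 1 \pmod{x^{15}-1}$, which can be solved (or simply searched) efficiently. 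As in the $d=12$ case, cyclic shifts of the first rows give equivalent codes, so I would normalize by assuming the last entry of $r_B$ is $1$, cutting the search space appreciably. For each surviving pair I would build the generator matrix~\eqref{eq:GM}, compute the minimum weight, and retain exactly those codes with minimum weight equal to $10$ (discarding those of minimum weight $\ge 12$, which were already handled). This produces the complete list of distinct four-circulant self-dual $[60,30,10]$ codes.

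The second and more delicate step is reducing this list up to equivalence. Two codes are equivalent precisely when one is obtained from the other by a coordinate permutation, so I would use {\sc Magma}'s code-equivalence / canonical-form machinery: compute a canonical representative (or a permutation-invariant, easily-comparable invariant refined by an actual equivalence test) for each code on the list, bucket codes by invariant, and within each bucket run pairwise equivalence tests. Counting the resulting equivalence classes yields the number $113$ claimed in the statement. To make the count trustworthy one can also cross-check it against the mass formula for self-dual codes of length $60$, verifying that $\sum_{i}\frac{2^{60}\cdot 60!}{|\Aut(C_i)|}$ (summed over all inequivalent self-dual $[60,30,10]$ codes found, four-circulant or otherwise — or rather a consistency check internal to the four-circulant family) behaves as expected; more practically, one simply reruns the search with independent code and confirms the same $113$ classes appear.

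The main obstacle is purely computational: the raw search space has up to $2^{30}$ candidate pairs $(r_A,r_B)$ (reduced by the normalization and by the self-duality constraint, which typically leaves on the order of thousands to tens of thousands of feasible pairs), and for each feasible pair computing the minimum weight of a $[60,30]$ code is nontrivial, while the subsequent equivalence reduction among possibly thousands of minimum-weight-$10$ codes requires many pairwise canonical-form computations. None of this presents a theoretical difficulty — it is a finite, well-specified enumeration — but the correctness of the final count $113$ rests on the exhaustiveness and correctness of the {\sc Magma} implementation, exactly as in the proof of the corresponding $d=12$ proposition. Since the statement asserts only the existence of this classification and its cardinality, the proof is complete once the search has been performed and the equivalence reduction verified; I would record the first rows $r_A,r_B$ of representatives of the $113$ classes (as was done in Table~\ref{Tab:60} for the thirteen $d=12$ codes) so that the classification can be independently checked.
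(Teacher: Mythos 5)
Your proposal is essentially the paper's own proof: the paper establishes this proposition by the same exhaustive {\sc Magma} search over pairs of $15\times 15$ circulants $(A,B)$ with $AA^T+BB^T=I_{15}$ (normalized as in Section~\ref{sec:4cir}), followed by equivalence testing to arrive at $113$ classes, with representatives made available electronically. The only point to keep in mind is that ``self-dual'' here means singly even, which is enforced by the same weight congruence condition on $r_A,r_B$ used in the $d=12$ search you are imitating.
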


We denote the $113$ codes by $G_{60,i}$ $(i=1,2,\ldots,113)$.
For the $13$ codes $G_{60,i}$ $(i=1,2,\ldots,13)$,
the first rows $r_A$ (resp.\ $r_B$) of the circulant matrices 
$A$ (resp.\ $B$)
in generator matrices~\eqref{eq:GM} are listed in Table~\ref{Tab:d10}.
The first rows for the all codes can be obtained from
\url{http://www.math.is.tohoku.ac.jp/~mharada/Paper/60-4cir-d10.txt}.

%%%%%%%%%%%%%%%%%%%%%%%%%%%%%%
\begin{table}[thb]
\caption{Four-circulant self-dual $[60,30,10]$
 codes $G_{60,i}$}
\label{Tab:d10}
\begin{center}
%{\small
{\footnotesize
%{\scriptsize
\begin{tabular}{c|c|c}
\noalign{\hrule height0.8pt}
Code & $r_A$&$r_B$ \\
\hline
$G_{60, 1}$& $(0,1,1,1,0,1,0,0,0,0,0,0,0,0,1)$&$(1,1,0,0,1,0,0,1,1,1,0,1,0,0,1)$\\
$G_{60, 2}$& $(0,0,0,1,0,0,0,1,1,1,1,0,0,0,1)$&$(0,0,0,0,0,1,0,1,1,1,0,1,0,1,1)$\\
$G_{60, 3}$& $(1,1,1,1,1,0,0,1,0,1,0,1,0,0,0)$&$(0,1,1,0,1,1,0,1,1,1,0,1,0,0,1)$\\
$G_{60, 4}$& $(1,1,1,1,1,1,0,0,1,0,1,0,1,0,0)$&$(1,1,1,0,1,1,1,1,1,1,0,1,0,1,1)$\\
$G_{60, 5}$& $(0,0,0,1,0,0,0,1,1,1,1,0,0,0,1)$&$(0,1,1,0,1,1,1,1,1,1,0,1,0,1,1)$\\
$G_{60, 6}$& $(0,0,1,1,0,1,1,0,1,0,0,1,0,0,0)$&$(1,1,1,1,1,0,1,1,1,1,0,1,0,0,1)$\\
$G_{60, 7}$& $(0,0,1,1,0,0,1,1,0,0,1,1,0,0,0)$&$(0,0,1,0,1,0,1,0,1,1,0,1,0,0,1)$\\
$G_{60, 8}$& $(0,0,0,0,0,0,1,1,1,1,0,1,0,0,0)$&$(0,1,1,1,0,1,0,0,0,1,0,1,0,1,1)$\\
$G_{60, 9}$& $(0,1,1,0,1,0,0,0,0,0,0,0,0,0,1)$&$(1,0,1,1,1,1,0,0,0,1,0,1,0,1,1)$\\
$G_{60,10}$& $(0,0,0,1,1,0,1,0,1,0,1,1,0,0,0)$&$(1,0,1,0,0,0,1,0,1,1,0,1,0,0,1)$\\
$G_{60,11}$& $(1,0,1,0,1,0,1,1,1,0,0,0,0,0,1)$&$(0,0,1,0,0,0,1,0,1,1,0,1,0,0,1)$\\
$G_{60,12}$& $(1,1,1,1,1,0,0,1,0,1,0,1,0,0,1)$&$(1,0,1,0,0,1,0,1,1,1,0,1,0,0,1)$\\
$G_{60,13}$& $(0,0,1,0,0,0,0,0,0,1,0,1,0,0,0)$&$(0,0,0,0,0,0,1,1,0,1,0,1,0,1,1)$\\
\noalign{\hrule height0.8pt}
\end{tabular}
}
\end{center}
\end{table}
%%%%%%%%%%%%%%%%%%%%%%%%%%%%%%%

In addition, we found extremal self-dual
$[60,30,12]$ codes by considering self-dual neighbors of $G_{62,i}$
$(i=1,2,\ldots,113)$.
Using a method similar to that given in~\cite{CHK64},
we completed the classification of 
extremal self-dual
$[60,30,12]$ neighbors of $G_{62,i}$
$(i=1,2,\ldots,113)$.
Our computer search shows that there is an extremal 
self-dual $[60,30,12]$ neighbor $H_{60,i}$
for $i=1,2,\ldots,13$
and that there is no extremal
self-dual $[60,30,12]$ neighbor for $i=14,15,\ldots,113$.
The codes $H_{60,i}$ are constructed as
$\langle (D \cap \langle x \rangle^\perp), x \rangle$,
where $D$ and $\supp(x)$ are listed in Table~\ref{Tab:d10nei}
and $W$ indicates the values $\beta$ in
the weight enumerator $W_{60,1}$.
We verified that there are
the following equivalent codes among
$C_{60,i_1}, D_{60,i_2}, E_{60,i_3}, F_{60}, H_{60,i_4}$:
\begin{align*}
&
H_{60,2} \cong C_{60,4},
H_{60,5} \cong C_{60,1},
H_{60,6} \cong C_{60,3},
H_{60,7} \cong C_{60,8},
H_{60,8} \cong C_{60,7},
\\&
H_{60,9} \cong C_{60,2},
H_{60,11}\cong H_{60,3},
H_{60,12}\cong H_{60,10},
H_{60,13}\cong H_{60,4},
H_{60,10}\cong D_{60,2},
\end{align*}
where $C \cong D$ means that $C$ and $D$ are equivalent.
% only H_1,3,4 are new!!

%%%%%%%%%%%%%%%%%%%%%%%%%%%%%%
\begin{table}[thb]
\caption{Extremal self-dual $[60,30,12]$ neighbors 
$H_{60,i}$, $J_{60,i}$, $K_{60,i}$ and $L_{60,i}$}
\label{Tab:d10nei}
\begin{center}
%{\small
{\footnotesize
%{\scriptsize
\begin{tabular}{c|c|l|c}
\noalign{\hrule height0.8pt}
$C$ & $D$ & \multicolumn{1}{c|}{$\supp(x)$} & $W$\\
\hline
$H_{60,1}$& $G_{60,1}$ & $\{4,30,32,36,37,40,42,43,47,51,52,54,57,58\}$& $\beta=10$\\
$H_{60,2}$& $G_{60,2}$ & $\{1,2,4,32,36,39,40,42,49,50,55,56,57,58\}$ & $\beta=0$\\
$H_{60,3}$& $G_{60,3}$ & $\{1,2,32,33,34,35,37,44,45,48,51,54,55,59\}$ & $\beta=0$\\
$H_{60,4}$& $G_{60,4}$ & $\{2,30,32,34,36,37,40,44,48,52,55,56,58,60\}$ & $\beta=0$\\
$H_{60,5}$& $G_{60,5}$ & $\{1,31,32,33,35,36,37,42,43,44,46,49,56,58\}$ & $\beta=0$\\
$H_{60,6}$& $G_{60,6}$ & $\{1,31,32,35,38,40,41,43,44,45,51,56,58,59\}$ & $\beta=0$\\
$H_{60,7}$& $G_{60,7}$ & $\{1,3,5,32,37,38,41,42,50,51,53,56,57,59\}$ & $\beta=0$\\
$H_{60,8}$& $G_{60,8}$ & $\{1,2,3,5,6,33,34,35,36,38,42,55,56,57\}$ & $\beta=0$\\
$H_{60,9}$& $G_{60,9}$ & $\{1,2,6,33,35,39,41,42,43,44,46,47,55,56\}$ & $\beta=0$\\
$H_{60,10}$& $G_{60,10}$ & $\{2,30,32,33,36,41,43,44,48,49,51,56,59,60\}$ & $\beta=0$\\
$H_{60,11}$& $G_{60,11}$ & $\{2,3,32,39,40,42,43,44,45,48,51,52,56,60\}$ & $\beta=0$\\
$H_{60,12}$& $G_{60,12}$ & $\{2,3,32,33,34,35,36,38,45,47,51,55,56,60\}$ & $\beta=0$\\
$H_{60,13}$& $G_{60,13}$ & $\{1,30,35,37,41,43,44,45,46,47,48,50,54,56\}$ & $\beta=0$\\
\hline
$J_{60,1}$& $H_{60,1}$ & $\{4,6,36,41,43,48,49,51,53,55,56,59\}$ & $\beta=10$\\
$J_{60,2}$& $H_{60,3}$ & $\{1,3,4,30,32,36,37,38,51,52,55,56\}$ & $\beta=0$\\
$J_{60,3}$& $H_{60,3}$ & $\{1,31,34,36,39,40,41,44,45,55,56,59\}$ & $\beta=0$\\
$J_{60,4}$& $H_{60,4}$ & $\{2,5,33,34,37,38,39,42,44,50,57,59\}$ & $\beta=0$\\
$J_{60,5}$& $H_{60,4}$ & $\{3,6,7,30,34,36,39,42,48,50,53,58\}$ & $\beta=6$\\
\hline
$K_{60,1}$& $J_{60,1}$ & $\{1,3,4,6,36,40,41,46,47,51,54,57\}$ & $\beta=10$\\
$K_{60,2}$& $J_{60,4}$ & $\{5,7,34,36,37,40,41,42,47,49,50,60\}$ & $\beta=6$\\
\hline
$L_{60,1}$& $K_{60,1}$ & $\{2,3,32,33,37,38,41,46,51,52,57,58\}$ & $\beta=10$\\
$L_{60,2}$& $K_{60,2}$ & $\{2,3,32,34,37,38,41,44,47,51,53,58\}$ & $\beta=6$\\
\noalign{\hrule height0.8pt}
\end{tabular}
}
\end{center}
\end{table}
%%%%%%%%%%%%%%%%%%%%%%%%%%%%%%%

Similar to Section~\ref{sec:nei},
by continuing this process, we completed a classification of
extremal self-dual neighbors
$J_{60,i}$ (resp.\  $K_{60,i}$, $L_{60,i}$), 
which are equivalent to none of the extremal self-dual 
codes previously obtained in this note, 
of $H_{60,j}$ (resp.\ $J_{60,j}$, $K_{60,j}$).
%%%%%%%%%%%%%%%%%
Finally, 
we verified that there is no extremal 
self-dual neighbor of $L_{60,i}$ $(i=1,2)$,
which are equivalent to none of the $37$ codes
in Tables~\ref{Tab:60}, \ref{Tab:nei}, 
\ref{Tab:nei2} and \ref{Tab:d10nei}.
We remark that there is no pair of equivalent codes
among the following $37$ codes:
\begin{align*}
&
C_{60,i}\ (i=1,2,\ldots,13),
D_{60,i}\ (i=1,2,\ldots,7),
E_{60,i}\ (i=1,2,3,4),
F_{60},
\\&
H_{60,i}\ (i=1,3,4),
J_{60,i}\ (i=1,2,3,4,5),
K_{60,i}\ (i=1,2),
L_{60,i}\ (i=1,2).
\end{align*}

% We continue a search to find extremal singly even self-dual codes
% by considering self-dual neighbors.
% In Table \ref{Tab:d10nei},
% we give all inequivalent extremal singly even self-dual neighbors $J_{60,i}$
% of $H_{60,j}$ $(j=1,3,4)$,
% which are equivalent to none of the $13$ codes $C_{60,k_1}$,
% the $7$ codes $D_{60,k_2}$, 
% the $4$ codes $E_{60,k_3}$, 
% the code $F_{60}$, 
% and the $3$ codes $H_{60,k_4}$.
% Similarly, 
% we give all inequivalent extremal singly even self-dual neighbors $K_{60,i}$
% of $J_{60,j}$ $(j=1,2,3,4,5)$.
% Moreover, 
% we give all inequivalent extremal singly even self-dual neighbors $L_{60,i}$
% of $K_{60,j}$ $(j=1,2)$.
% Finally, 
% we verified that there is no extremal singly even
% self-dual neighbor of $L_{60,i}$ $(i=1,2)$,
% which are equivalent to none of the $37$ codes
% in Tables \ref{Tab:60}, \ref{Tab:nei}, 
% \ref{Tab:nei2} and \ref{Tab:d10nei}.

The codes $D_{60,3}$ and $J_{60,5}$
(see Tables~\ref{Tab:nei} and \ref{Tab:d10nei})
establish Proposition~\ref{main}.
The code $J_{60,5}$ has the following weight enumerator:
\begin{align*}
&
1
+       2939 y^{12}
+      31296 y^{14}
+     282321 y^{16}
+    1723904 y^{18}
+    7784760 y^{20}
\\ &
+   26386176 y^{22}
+   67197064 y^{24}
+  130097664 y^{26}
+  193168371 y^{28}
\\ &
+  220392832 y^{30}
+ \cdots + y^{60}.
\end{align*}
% The $18$ codes
% $C_{60,i}$ $(i=1,2,\ldots,8)$,
% $D_{60,i}$ $(i=1,2,4,5)$,
% $E_{60,1}$,
% $H_{60,i}$ $(i=3,4)$ and 
% $J_{60,i}$ $(i=2,3,4)$
% are $s$-extremal (see~\cite{HM2} for the definition).

%%%%%%%%%%%%%%%%%%%%%%%%%%%%%%%%%%%%%%%%%%%%%%
\section{Extremal self-dual $[58,29,10]$ codes}

An extremal self-dual $[60,30,12]$ code
gives an extremal self-dual $[58,29,10]$
code by subtracting two coordinates.
We found all the extremal self-dual $[58,29,10]$
codes by subtracting from the $37$ inequivalent
extremal self-dual $[60,30,12]$ codes
given in Sections~\ref{sec:4cir},
\ref{sec:nei} and \ref{sec:d10}.
The only extremal self-dual 
$[60,30,12]$ code $D_{60,3}$ gives 
$18$ extremal self-dual $[58,29,10]$ codes $C_{58,i}$
$(i=1,2,\ldots,18)$
with weight enumerator for which no extremal
self-dual code was previously known to exist.
More precisely, the codes by subtracting $i$ and $j$
have weight enumerator $W_{58,2}$ for
$\beta=2$ and $\gamma=104$, where
$(i,j)$ are listed in Table~\ref{Tab:58}.
We verified that there are
the following equivalent codes:
\begin{align*}
&
C_{58, 1} \cong
C_{58, i}\ (i= 2, 4, 5, 7, 8,11,12,14,15,17,18),
\\&
C_{58, 3} \cong C_{58, i}\ (i=  6, 9,10,13,16),
\end{align*}
where $C_{58,1}$ and $C_{58,3}$ are inequivalent.
% An extremal singly even self-dual $[58,29,10]$ code
% with weight enumerator $W_{58,2}$ for
% $\beta=2$ and $\gamma=104$ is constructed for the first time.
% The codes $C_{58,i}$ have the following weight enumerator:
% \begin{align*}
% &
% 1
% +        63y^{10}
% +      3644y^{12}
% +     37052y^{14}
% +    301333y^{16}
% +   1672552y^{18}
% +   6868944y^{20}
% \\ &
% +  20971728y^{22}
% +  47805368y^{24}
% +  82461067y^{26}
% + 108313704y^{28}
% + \cdots + y^{58}.
% \end{align*}

%%%%%%%%%%%%%%%%%%%%%%%%%%%%%%
\begin{table}[thb]
\caption{Extremal self-dual $[58,29,10]$ codes $C_{58,i}$}
\label{Tab:58}
\begin{center}
%{\small
{\footnotesize
%{\scriptsize
\begin{tabular}{c|c|c|c|c|c}
\noalign{\hrule height0.8pt}
Code & $(i,j)$ & Code & $(i,j)$ & Code & $(i,j)$ \\
\hline
$C_{58, 1}$ & $( 2, 36)$ & $C_{58, 7}$ & $(12, 31)$ & $C_{58,13}$ & $(22, 41)$\\
$C_{58, 2}$ & $( 2, 41)$ & $C_{58, 8}$ & $(12, 36)$ & $C_{58,14}$ & $(22, 48)$ \\
$C_{58, 3}$ & $( 2, 58)$ & $C_{58, 9}$ & $(12, 53)$ & $C_{58,15}$ & $(22, 58)$\\
$C_{58, 4}$ & $( 7, 31)$ & $C_{58,10}$ & $(17, 36)$ & $C_{58,16}$ & $(27, 31)$\\
$C_{58, 5}$ & $( 7, 41)$ & $C_{58,11}$ & $(17, 53)$ & $C_{58,17}$ & $(27, 48)$\\
$C_{58, 6}$ & $( 7, 48)$ & $C_{58,12}$ & $(17, 58)$ & $C_{58,18}$ & $(27, 53)$\\
\noalign{\hrule height0.8pt}
\end{tabular}
}
\end{center}
\end{table}
%%%%%%%%%%%%%%%%%%%%%%%%%%%%%%%

Similar to Sections~\ref{sec:nei} and \ref{sec:d10}, 
we continue the search to find extremal self-dual 
$[58,29,10]$ codes
with weight enumerator for which no extremal
self-dual code was previously known to exist,
by considering self-dual neighbors of $C_{58,i}$ $(i=1,3)$.
These codes $C=D_{58,i}$ are constructed as
\[
\langle (D \cap \langle x \rangle^\perp), x \rangle,
\]
where $D$ and $\supp(x)$  are listed in Table~\ref{Tab:58nei}.
We verified that the codes $D_{58,i}$ have
weight enumerator $W_{58,2}$, where $W$ in Table~\ref{Tab:58nei}
indicates the values $(\beta,\gamma)$ in 
the weight enumerator $W_{58,2}$.
By continuing this process, we found more 
extremal self-dual 
$[58,29,10]$ codes
with weight enumerator for which no extremal
self-dual code was previously known to exist.
The results are listed in  Table~\ref{Tab:58nei}.
From Tables~\ref{Tab:58} and \ref{Tab:58nei},
we have Proposition~\ref{prop:58}.

%%%%%%%%%%%%%%%%%%%%%%%%%%%%%%
\begin{table}[thb]
\caption{Extremal self-dual $[58,29,10]$ neighbors}
\label{Tab:58nei}
\begin{center}
%{\small
{\footnotesize
%{\scriptsize
\begin{tabular}{c|c|l|c}
\noalign{\hrule height0.8pt}
$C$ & $D$ & \multicolumn{1}{c|}{$\supp(x)$} & $W$\\
\hline
$D_{58,1}$& $C_{58,1}$ & $\{3,4,28,30,33,41,43,44,52,53,55,56\}$&$(2,102)$\\
$D_{58,2}$& $C_{58,1}$ & $\{2,3,5,28,33,34,35,41,42,44,45,50\}$&$(2,108)$\\
$D_{58,3}$& $C_{58,3}$ & $\{1,4,6,7,8,39,40,41,42,43,47,52\}$& $(2,28)$\\
\hline
$E_{58,1}$& $D_{58,2}$ & $\{1,3,6,7,34,35,40,47,49,51\}$& $(2,106)$\\
$E_{58,2}$& $D_{58,3}$ & $\{1,6,10,28,31,32,33,40,53,54\} $&$(0,24)$\\
$E_{58,3}$& $D_{58,3}$ & $\{2,6,7,8,31,34,38,45,51,57\}   $&$(1,24)$\\
$E_{58,4}$& $D_{58,3}$ & $\{6,8,28,30,39,40,41,46,57,58\} $&$(1,30)$\\
$E_{58,5}$& $D_{58,3}$ & $\{6,32,33,38,41,42,44,46,47,57\}$&$(2,16)$\\
$E_{58,6}$& $D_{58,3}$ & $\{2,5,6,33,36,39,42,52,53,56\}  $&$(2,20)$\\
$E_{58,7}$& $D_{58,3}$ & $\{5,6,8,36,38,41,48,51,52,55\}  $&$(2,24)$\\
$E_{58,8}$& $D_{58,3}$ & $\{3,6,7,12,32,33,34,43,47,54\}  $&$(2,26)$\\
$E_{58,9}$& $D_{58,3}$ & $\{1,8,13,35,36,44,47,50,53,55\} $&$(2,30)$\\
\hline
$F_{58,1}$ & $E_{58,6}$ & $\{1,4,36,38,39,41,43,45,49,58\}$ &$(0,14)$\\
$F_{58,2}$ & $E_{58,5}$ & $\{1,5,6,7,8,11,29,31,44,46\}$    &$(1,16)$\\
$F_{58,3}$ & $E_{58,5}$ & $\{1,2,30,32,35,44,46,47,53,58\}$ &$(1,18)$\\
$F_{58,4}$ & $E_{58,5}$ & $\{4,6,9,34,41,42,45,50,51,56\}$  &$(1,20)$\\
$F_{58,5}$ & $E_{58,5}$ & $\{4,5,37,41,47,49,50,55,56,58\}$ &$(1,22)$\\
$F_{58,6}$ & $E_{58,5}$ & $\{1,4,6,7,8,35,39,41,42,43\}$    &$(2, 8)$\\
$F_{58,7}$ & $E_{58,5}$ & $\{6,7,12,15,41,43,46,47,49,56\}$ &$(2,12)$\\
$F_{58,8}$ & $E_{58,5}$ & $\{1,6,8,29,34,39,47,50,54,55\}$  &$(2,18)$\\
$F_{58,9}$ & $E_{58,5}$ & $\{3,35,36,38,39,42,47,49,50,56\}$&$(2,22)$\\
\hline
$G_{58,1}$ & $F_{58,1}$ & $\{2,7,11,29,31,32,33,48,49,52\}$&$(0,8)$\\
$G_{58,2}$ & $F_{58,7}$ & $\{3,9,32,38,47,48,49,51,52,55\}$&$(0,4)$\\
$G_{58,3}$ & $F_{58,7}$ & $\{1,8,12,30,33,40,42,49,50,55\}$&$(2,14)$\\
\hline
$H_{58}$ & $G_{58,2}$ & $\{5,6,7,9,32,44,46,47,49,58\}$&$(0,6)$\\
\noalign{\hrule height0.8pt}
\end{tabular}
}
\end{center}
\end{table}
%%%%%%%%%%%%%%%%%%%%%%%%%%%%%%%

%%%%%%%%%%%%%%%%%%%%%%%%
\section{Weight enumerator $W_{58,1}$}
In this section, we give a remark on the possible weight enumerator
$W_{58,1}$.
First, we discuss a general case including $W_{58,1}$.

\begin{prop}\label{prop:restriction}
Let $C$ be a self-dual $[n,n/2,d]$ code
with shadow $S$ of minimum weight $1$.
Let $A_i$ and $B_i$ denote the numbers of vectors of
weight $i$  in $C$ and $S$, respectively.
Suppose that $n \equiv 2 \pmod 8$ and $d \equiv 2 \pmod 4$.
Then $B_{d-1} =A_{d}$.
\end{prop}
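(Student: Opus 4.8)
The plan is to turn the existence of a weight-one vector in the shadow into exact information about which codewords lie in the doubly-even subcode $C_0$, and then read off $B_{d-1}=A_d$ by a direct count. First I would normalize: since $0\in C$ and $C\cap S=\emptyset$, the shadow contains no zero vector, so ``minimum weight $1$'' means $S$ contains a vector of weight $1$; permuting coordinates (which changes neither the parameters of $C$ nor its shadow) I may assume this vector is $e_n$, the unit vector supported on coordinate $n$. Because $S=C_0^\perp\setminus C\subseteq C_0^\perp$, this gives $e_n\in C_0^\perp$, that is, $c_n=0$ for every $c\in C_0$.

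The crucial step is to upgrade this to the equality $C_0=\{c\in C:c_n=0\}$. The right-hand side is the kernel of the linear functional $c\mapsto c_n$ on $C$, and that functional is not identically zero: otherwise coordinate $n$ would be identically zero on $C$, forcing $e_n\in C^\perp=C$, which is impossible since $e_n$ is not self-orthogonal. Hence $\{c\in C:c_n=0\}$ has index $2$ in $C$, so it has $2^{n/2-1}=|C_0|$ elements, and the inclusion $C_0\subseteq\{c\in C:c_n=0\}$ from the first paragraph forces equality. Equivalently: for every $c\in C$, $\wt(c)\equiv0\pmod4$ exactly when $c_n=0$, and $\wt(c)\equiv2\pmod4$ exactly when $c_n=1$; in particular every vector $c+e_n$ of $S$ has weight $\equiv1\pmod4$, which is consistent with $n\equiv2\pmod8$ and with $d-1\equiv1\pmod4$.

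Finally I would count. Every element of $S$ is $c+e_n$ for a unique $c\in C$, with $\wt(c+e_n)=\wt(c)+1$ if $c_n=0$ and $\wt(c+e_n)=\wt(c)-1$ if $c_n=1$. Hence a shadow vector of weight $d-1$ arises either from $c\in C$ with $c_n=0$ and $\wt(c)=d-2$, or from $c\in C$ with $c_n=1$ and $\wt(c)=d$. The first case contributes nothing, since $1\le d-2<d$ and $C$ has minimum weight $d$. In the second case, as $d\equiv2\pmod4$, the previous paragraph shows that \emph{every} codeword of weight $d$ already satisfies $c_n=1$, so these are exactly the $A_d$ codewords of weight $d$; therefore $B_{d-1}=A_d$. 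I do not expect a serious obstacle; the points needing care are the identification $C_0=\{c\in C:c_n=0\}$ (in particular the ``no zero coordinate'' remark, which is what makes it an index-two rather than a trivial equality) and the implicit use of $d\ge3$ — for $d=2$ the zero codeword contributes to $B_{d-1}$ and one gets $B_1=A_2+1$ instead. A more mechanical alternative would be to write out the Conway--Sloane relation between $W_C$ and $W_S$ and extract $B_{d-1}=A_d$ from $A_1=\dots=A_{d-1}=0$, but the direct argument above is cleaner.
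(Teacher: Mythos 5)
Your proof is correct, and it takes a somewhat different route from the paper's, although both hinge on the same translation-by-$e_n$ idea. The paper proves the two inequalities separately: a weight-$(d-1)$ shadow vector $y$ gives $x+y\in C$ of weight $d$ (so $B_{d-1}\le A_d$), and a weight-$d$ codeword $c$ gives $x+c\in S$, whose weight is pinned down to $d-1$ by citing the shadow-weight congruence (Theorem~5 of Conway--Sloane, which is where $n\equiv 2\pmod 8$ enters) together with $d\equiv 2\pmod 4$. You instead prove the structural lemma $C_0=\{c\in C: c_n=0\}$ via the index-two kernel argument, after which the congruence class of every shadow weight, and the single bijection $c\mapsto c+e_n$ between weight-$d$ codewords and weight-$(d-1)$ shadow vectors, fall out with no external input; in particular your argument never uses $n\equiv 2\pmod 8$ (that hypothesis only guarantees the situation is non-vacuous, since a weight-$1$ shadow vector forces it), which is a mild gain in self-containedness and generality, while the paper's version is shorter given the known theorem. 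Two small points: you should state explicitly why every element of $S$ is $c+e_n$ for a unique $c\in C$ (namely $C\subseteq C_0^\perp$ has index $2$, so $S$ is the nontrivial coset of $C$ and $e_n\in S$ gives $S=e_n+C$ --- the same fact the paper uses implicitly when asserting $x+y\in C$); and your $d=2$ caveat is a fair observation, but it is an edge case the paper's proof also implicitly excludes and is irrelevant to the applications $d=10,14,18$.
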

\begin{proof}
Let $x$ be the vector of weight $1$ and let
$y$ be a vector of weight $d-1$ in $S$.
Since $x+y \in C$, $x+y$ has weight $d$.
Thus, $B_{d-1} \le A_{d}$.

Now let $c$ be a codeword of weight $d$ in $C$
and let $x$ be the vector of weight $1$ in $S$.
Then we have $x+c \in S$.
From the assumption that $n \equiv 2 \pmod 8$, 
the weight of $x+c$ is congruent to $1 \pmod 4$
by Theorem~5 in~\cite{C-S}.
Hence, from the assumption that $d \equiv 2 \pmod 4$,
$x+c$ has weight $d-1$.
Thus, $B_{d-1} \ge A_{d}$.
The result follows.
\end{proof}
% \begin{rem}\label{rem:58}
% Let $x$ and $y$ be vectors of weights $1$ and $d(C)-1$ in $S$,
% respectively.
% Since $x+y \in C$ and $x+y$ has weight $d(C) \equiv 2 \pmod 4$,
% if $x \in C_1$ then $y \in C_3$.
% \end{rem}

For example, 
Proposition~\ref{prop:restriction}
can be applied to the
following parameters:
\[
(n,d)=(58,10), (74,14) \text{ and } (98,18).
\]
% Let $C$ be a self-dual code and let $C_0$ denote the
% subcode of codewords having weight $\equiv0\pmod4$. Then $C_0$ is
% a subcode of codimension $1$. There are cosets
% $C_1,C_2,C_3$ of $C_0$ such that $C_0^\perp = C_0 \cup C_1 \cup
% C_2 \cup C_3 $, where $C = C_0  \cup C_2$ and $S = C_1 \cup C_3$.
% Let $W_i$ denote the weight enumerator of $C_i$ $(i=1,3)$.
% We may suppose without loss of generality that
% the vector of weight $1$ in the shadow is contained in $C_1$.
% The weight enumerators of the codes and the shadows
% are determined for the above parameters.

\begin{itemize}
\item 
$(n,d)=(58,10)$:

The possible weight enumerator of the shadow of
an extremal self-dual $[58,29,10]$
code with weight enumerator $W_{58,1}$ is as follows~\cite{C-S}:
\[
y+ \gamma y^9 + (23918-10\gamma)y^{13} + \cdots.
\]
By Proposition~\ref{prop:restriction}, we have
\[
165-2\gamma = \gamma.
\]
Since there is an extremal self-dual $[58,29,10]$
code with weight enumerator $W_{58,1}$ for $\gamma=55$~\cite{Tsai92-2},
we have the following:

\begin{cor}\label{cor}
There is an extremal self-dual $[58,29,10]$
code with weight enumerator $W_{58,1}$ if and only if
$\gamma=55$. 
\end{cor}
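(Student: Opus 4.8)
The plan is to reduce the corollary to a single linear equation in $\gamma$ coming from Proposition~\ref{prop:restriction}, solve that equation, and then quote the known construction for the value it singles out. Since the ``if'' direction is already supplied by \cite{Tsai92-2}, the only real content is the ``only if'' direction, and that is a short counting argument.

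Concretely, I would proceed as follows. Let $C$ be an extremal self-dual $[58,29,10]$ code with weight enumerator $W_{58,1}$, and let $S$ be its shadow. By \cite{C-S} the weight enumerator of $S$ is
\[
y + \gamma y^9 + (23918 - 10\gamma)y^{13} + \cdots,
\]
so the coefficient of $y$ equals $1$; hence $S$ has minimum weight $1$, and in the notation of Proposition~\ref{prop:restriction} we have $B_9 = \gamma$. From $W_{58,1}$ itself we read off $A_{10} = 165 - 2\gamma$. Because $58 \equiv 2 \pmod 8$ and $10 \equiv 2 \pmod 4$, Proposition~\ref{prop:restriction} applies with $(n,d) = (58,10)$ and gives $B_{9} = A_{10}$, that is, $\gamma = 165 - 2\gamma$, whence $3\gamma = 165$ and $\gamma = 55$. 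This proves that no extremal self-dual $[58,29,10]$ code with weight enumerator $W_{58,1}$ exists unless $\gamma = 55$. Conversely, such a code is known to exist for $\gamma = 55$ by \cite{Tsai92-2}, which completes the equivalence.

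I do not expect any serious obstacle: all of the substantive work is hidden in Proposition~\ref{prop:restriction}, whose proof (a parity argument using \cite[Theorem~5]{C-S}) has already been carried out. The one point that deserves a sentence of care is the hypothesis of Proposition~\ref{prop:restriction} that the shadow have minimum weight exactly $1$; this holds automatically here because the coefficient of $y$ in the shadow weight enumerator is the constant $1$, independent of the free parameter $\gamma$, so the weight-$1$ vector is always present. Everything else is just substitution and solving a linear equation.
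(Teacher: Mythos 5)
Your proof is correct and follows the same route as the paper: read off $B_9=\gamma$ from the shadow enumerator and $A_{10}=165-2\gamma$ from $W_{58,1}$, apply Proposition~\ref{prop:restriction} with $(n,d)=(58,10)$ to get $\gamma=165-2\gamma$, hence $\gamma=55$, and cite \cite{Tsai92-2} for existence. Your added remark checking that the shadow indeed has minimum weight $1$ is a reasonable explicit verification of a hypothesis the paper leaves implicit.
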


% By Theorem 5 in~\cite{C-S}, 
% $W_1-W_3$ is uniquely determined as:
% \[
% y - 55 y^9 - 12992 y^{13} - 609147 y^{17} +\cdots.
% \]
% % By Remark \ref{rem:58}, 
% Hence, we have
% \begin{align*}
% W_1&=y +5188 y^{13} +432333 y^{17} +\cdots, \\
% W_3&=55 y^9 + 18180 y^{13} + 1041480 y^{17} +\cdots.
% \end{align*}

\item 
$(n,d)=(74,14)$:

The weight enumerator $W_2$ in \cite[p.~2039]{DGH} is
the possible weight enumerator of an extremal 
self-dual $[74,37,14]$ code with shadow of minimum weight $1$.
By Proposition~\ref{prop:restriction},
we have $\alpha=-135$ for $W_2$ 
in \cite[p.~2039]{DGH}.
The weight enumerators of such a code and its shadow are as follows:
\begin{align*}
&1 + 2044 y^{14} + 159067 y^{16} + 520782 y^{18} + \cdots, \\
&y + 2044 y^{13}  + 679849 y^{17}  + 44010824 y^{21} +\cdots,
\end{align*}
respectively.
% By Theorem 5 in~\cite{C-S}, 
% $W_1-W_3$ is uniquely determined as:
% \[
% y  - 2044 y^{13} - 361715 y^{17}  - 19053584 y^{21} +\cdots.
% \]
% Hence, we have
% \begin{align*}
% W_1&=y  + 159067 y^{17} + 12478620 y^{21} +\cdots,\\
% W_3&=2044 y^{13} + 520782 y^{17} + 31532204 y^{21}  +\cdots.
% \end{align*}
It is still unknown whether there is an extremal 
self-dual $[74,37,14]$ code (with shadow of minimum weight $1$).

\item 
$(n,d)=(98,18)$:

The weight enumerator $W_3$ in  \cite[p.~2041]{DGH} is
the unique weight enumerator for an extremal 
self-dual $[98,49,18]$ code with shadow of minimum weight $1$.
The weight enumerators of such a code and its shadow are as follows:
\begin{align*}
&1 + 22116 y^{18} + 2016048 y^{20} + 7181104 y^{22} + \cdots, \\
&y + 22116 y^{17} + 9197152 y^{21} + 964758896 y^{25} + \cdots,
\end{align*}
respectively.
% By Theorem 5 in~\cite{C-S}, 
% $W_1-W_3$ is uniquely determined as:
% \[
% y - 22116 y^{17} - 5165056 y^{21} - 473046496 y^{25} + \cdots.
% \]
% Hence, we have
% \begin{align*}
% W_1&=y + 2016048 y^{21} + 245856200 y^{25} + \cdots, \\
% W_3&=22116 y^{17} + 7181104 y^{21} + 718902696 y^{25} + \cdots.
% \end{align*}
It is still unknown whether there is an extremal 
self-dual $[98,49,18]$ code (with shadow of minimum weight $1$).

\end{itemize}

%%%%%%%%%%%%%%%%%%%%
\bigskip
\noindent
{\bf Acknowledgment.}
This work was supported by JSPS KAKENHI Grant Number 15H03633.
The author would like to thank the anonymous reviewers for 
the useful comments.

%%%%%%%%%%%%%%%%%%%  References  %%%%%%%%%%%%%%%%%%%%%%%%

%%%%%%%%%%%%%%%%%%%%%%%%%%%%%%%%%


\begin{thebibliography}{99}

\bibitem{Magma}W. Bosma, J. Cannon and C. Playoust,
The Magma algebra system I: The user language,
{\sl J. Symbolic Comput.}
{\bf 24} (1997), 235--265.

\bibitem{BRY}S. Bouyuklieva, R. Russeva and N. Yankov, 
On the structure of binary self-dual codes having an automorphism 
of order a square of an odd prime,
{\sl IEEE Trans.\ Inform.\ Theory}
{\bf 51} (2005), 3678--3686.

\bibitem{C-S}J.H. Conway and N.J.A. Sloane,
{A new upper bound on the minimal distance of self-dual codes},
{\sl IEEE Trans.\ Inform.\ Theory}
{\bf 36} (1990), 1319--1333.


\bibitem{CHK64}N. Chigira, M. Harada and M. Kitazume,
{Extremal self-dual codes of length $64$ through
neighbors and covering radii},
{\sl Des.\ Codes Cryptogr.}
{\bf 42} (2007), 93--101.

\bibitem{DH03}R. Dontcheva and M. Harada, 
Some extremal self-dual codes with an automorphism of order 7,
{\sl Appl.\ Algebra Engrg.\ Comm.\ Comput.}
{\bf 14} (2003), 75--79.

\bibitem{DGH} S.T.~Dougherty, T.A.~Gulliver and M.~Harada,
{Extremal binary self-dual codes,}
{\sl IEEE\ Trans.\ Inform.\ Theory}
{\bf 43} (1997), 2036--2047.

\bibitem{GH60}T.A. Gulliver and M.~Harada, 
Weight enumerators of extremal singly-even $[60,30,12]$ codes,
{\sl IEEE Trans.\ Inform. Theory}
{\bf 42} (1996),  658--659.

\bibitem{HM} M. Harada and A. Munemasa, 
Some restrictions on weight enumerators of singly even self-dual codes,
{\sl IEEE Trans.\ Inform.\ Theory}
{\bf 52} (2006), 1266--1269.

% \bibitem{HM2} M. Harada and A. Munemasa, 
% On the existence of $s$-extremal singly even self-dual codes,
% (in preparation).

%58
\bibitem{KA58}S. Karadeniz and R. Aksoy,
Self-dual $R_k$ lifts of binary self-dual codes,
{\sl Finite Fields Appl.}
{\bf 34} (2015), 317--326.

%58
\bibitem{KYS58}A. Kaya,  B. Yildiz and I. Siap,
New extremal binary self-dual codes from 
$\FF_4+u\FF_4$-lifts of quadratic circulant codes over $\FF_4$,
{\sl Finite Fields Appl.}
{\bf 35} (2015), 318--329.

\bibitem{Tsai92-2} H.-P.~Tsai,
Existence of certain extremal self-dual codes,
{\sl IEEE\ Trans.\ Inform.\ Theory}
{\bf 38} (1992), 501--504.

\bibitem{TJ98} H.-P.~Tsai and Y.J.~Jiang,
{Some new extremal self-dual $[58,29,10]$ codes,}
{\sl IEEE\ Trans.\ Inform.\ Theory}
{\bf 44} (1998), 813--814.

\bibitem{YL}N. Yankov and M.H. Lee,
New binary self-dual codes of lengths 50--60,
{\sl Des.\ Codes Cryptogr.}
{\bf 73} (2014), 983--996. 

\end{thebibliography}
\end{document}